\documentclass{amsart}
\usepackage{amssymb}

\title{Arc-transitive graphs of valency $8$ have a semiregular automorphism}

\author[G. Verret]{Gabriel Verret}
\address{Gabriel Verret, Centre for Mathematics of Symmetry and Computation, \newline
\indent School of Mathematics and Statistics, The University of Western Australia, \newline
\indent 35 Stirling Highway, Crawley, WA 6009, Australia. \newline
\indent Also affiliated with : UP FAMNIT, University of Primorska, \newline
\indent Glagolja\v{s}ka 8, 6000 Koper, Slovenia.}

\email{gabriel.verret@uwa.edu.au}

\newtheorem{theorem}{Theorem}[section]
\newtheorem{lemma}[theorem]{Lemma}

\newcommand{\V}{{\rm{V}}}
\newcommand{\A}{{\rm{A}}}
\newcommand{\Aut}{{\rm{Aut}}}
\newcommand{\Sym}{{\rm Sym}}

\newcommand{\norml}{\trianglelefteq}

\newcommand{\AlG}{{\rm Al}}
\newcommand{\vGa}{\vec{\Gamma}}

\thanks{The author is supported by UWA as part of the Australian Research Council grant DE130101001.}

\subjclass[2010]{Primary 20B25; Secondary 05E18}
\keywords{arc-transitive graphs, semiregular automorphism.} 

\begin{document}
\begin{abstract}
One version of the polycirculant conjecture states that every vertex-transitive graph has a semiregular automorphism.  We give a proof of the conjecture in the arc-transitive case for graphs of valency $8$, which was the smallest open case.
\end{abstract}

\maketitle

\section{Introduction}
All graphs considered in this paper are finite and simple. An automorphism of a graph $\Gamma$ is a permutation of the vertex-set $\V(\Gamma)$ which preserves the adjacency relation. The set of automorphisms of $\Gamma$ form a group, denoted $\Aut(\Gamma)$. A graph $\Gamma$ is said to be $G$-\emph{vertex-transitive} if $G$ is a subgroup of $\Aut(\Gamma)$ acting transitively on $\V(\Gamma)$. Similarly, $\Gamma$ is said to be $G$-\emph{arc-transitive} if $G$ acts transitively on the arcs of $\Gamma$. (An \emph{arc} is an ordered pair of adjacent vertices). When $G=\Aut(\Gamma)$, the prefix $G$ in the above notation is sometimes omitted. 

A permutation group is called \emph{semiregular} if it has no non-identity element which fixes a point. A permutation is called semiregular if it generates a semiregular group. Equivalently, a permutation is semiregular if all of its cycles have the same length.

In 1981, Maru\v{s}i\v{c} conjectured~\cite{Dragan} that every vertex-transitive graph with at least $2$ vertices has a non-identity semiregular automorphism. This is sometimes called the polycirculant conjecture. While there has been a lot of work on this conjecture and some of its variants, it is still wide open. There has been progress in some directions. For example, the conjecture has been settled for graphs of valency at most $4$~\cite{Dobson,DraganScap}, while the valency $5$ case is still open.

In the arc-transitive case, slightly more can be said, but we first need a few definitions. If $\Gamma$ is  a graph and $G\in\Aut(\Gamma)$, then $G_v^{\Gamma(v)}$ denotes the permutation group induced by the action of the vertex-stabiliser $G_v$ on the neighbourhood $\Gamma(v)$ of the vertex $v$. A permutation group is called \emph{quasiprimitive} if each of its nontrivial normal subgroup is transitive, while a  vertex-transitive graph $\Gamma$ is called \emph{locally-quasiprimitive} if  $\Aut(\Gamma)_v^{\Gamma(v)}$ is quasiprimitive. Using this terminology, we have the following theorem due to Giudici and Xu. 

\begin{theorem}\cite[Theorem~1.1]{MichaelXu}\label{Theorem:MichaelXu}
Let $\Gamma$ be a locally-quasiprimitive vertex-transitive graph. Then $\Gamma$ has a non-identity semiregular automorphism.
\end{theorem}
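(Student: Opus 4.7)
My plan is to analyse a minimal normal subgroup $N$ of $G:=\Aut(\Gamma)$ and use local quasiprimitivity to reduce to two cases. After replacing $\Gamma$ by a connected component if necessary (the hypothesis passes to components), I note that $N_v^{\Gamma(v)}$ is a normal subgroup of the quasiprimitive group $G_v^{\Gamma(v)}$, hence is either trivial or transitive on $\Gamma(v)$.

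In the trivial case, $N_v$ fixes $\Gamma(v)$ pointwise, so $N_v\le N_u$ for every $u\in\Gamma(v)$. Since $N_u$ itself fixes $\Gamma(u)$ pointwise, $N_v$ then fixes $\Gamma(u)$ pointwise as well; iterating along paths and using connectedness of $\Gamma$ shows that $N_v$ fixes every vertex, so $N_v=1$. Therefore $N$ acts semiregularly on $\V(\Gamma)$, and any non-identity element of $N$ supplies the desired semiregular automorphism.

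In the transitive case, $\Gamma(v)$ lies in a single $N$-orbit for every $v$; a short connectedness argument then forces $N$ to have at most two orbits on $\V(\Gamma)$, and exactly two only when these orbits form a bipartition. Being minimal normal in $G$, $N$ is characteristically simple, so $N\cong T^k$ for some finite simple group $T$. If $T$ is abelian, then $N$ is elementary abelian and its transitive action on each orbit is regular; any non-identity element of $N$ is a union of $|T|$-cycles, hence semiregular on all of $\V(\Gamma)$.

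The main obstacle is the remaining case in which $T$ is nonabelian simple. Here $N$ has non-trivial point stabilisers and is not itself regular, so a semiregular element must be produced by a finer element-theoretic analysis rather than by choosing an arbitrary element of $N$. My plan is to invoke the Classification of Finite Simple Groups to locate, in each faithful transitive action of $T$, a prime $p$ and an element of $T$ of order $p$ that acts fixed-point-freely; such an element is automatically semiregular, since a prime-order permutation is semiregular if and only if it has no fixed points. Promoting this element from $T$ to $N\cong T^k$ and then to the action of $G$ on one (or each) $N$-orbit completes the proof, along the lines of Giudici's earlier result that every quasiprimitive permutation group admits a non-identity semiregular element.
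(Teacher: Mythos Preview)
The paper does not contain a proof of this theorem; it is quoted from Giudici and Xu and used as a black box in the proof of the main result. There is therefore no argument in the paper against which to compare your proposal.

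That said, your outline follows the broad strategy of the Giudici--Xu paper: pass to a minimal normal subgroup $N$, use local quasiprimitivity to force $N_v^{\Gamma(v)}$ to be trivial or transitive, and hence $N$ to be semiregular or to have at most two orbits; the abelian case is then straightforward. Your treatment of the nonabelian case, however, has a genuine gap. The assertion that in every faithful transitive action of a nonabelian simple group $T$ one can find an element of prime order acting fixed-point-freely is false: $M_{11}$ in its action of degree~$12$ is an \emph{elusive} group, meaning every element of prime order fixes a point. Giudici's quasiprimitive theorem handles such exceptions by exhibiting semiregular elements of composite order, and the two-orbit (biquasiprimitive) situation then requires further work to produce a single element that is simultaneously semiregular on both orbits. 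Your sketch does not address either of these points, and ``promoting'' an element from $T$ to $N\cong T^k$ and then to the action on $\V(\Gamma)$ is itself not automatic, since the action of $T^k$ on an $N$-orbit need not decompose into independent actions of the simple factors. As written, the proposal is a plausible outline of the Giudici--Xu approach but not yet a proof.
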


A transitive group of prime degree is clearly quasiprimitive (in fact primitive) and hence it follows from Theorem~\ref{Theorem:MichaelXu} that arc-transitive graphs of prime valency have a non-identity semiregular automorphism. In an upcoming paper~\cite{MichaelVerret}, we deal with the case of arc-transitive graphs of valency twice a prime. The main result of this paper is to deal with the smallest open valency.

\begin{theorem}\label{theo:main}
Let $\Gamma$ be an arc-transitive graph of valency $8$. Then $\Gamma$ has a non-identity semiregular automorphism.
\end{theorem}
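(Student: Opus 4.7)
The strategy revolves around Theorem~\ref{Theorem:MichaelXu}. Write $G=\Aut(\Gamma)$ and let $L=G_v^{\Gamma(v)}$ denote the local action on the $8$ neighbours of $v$. If $L$ is quasiprimitive, Theorem~\ref{Theorem:MichaelXu} immediately yields a non-identity semiregular automorphism, so I would assume throughout that $L$ is transitive but not quasiprimitive. Then $L$ possesses a non-trivial intransitive normal subgroup, hence preserves a non-trivial $G_v$-invariant partition of $\Gamma(v)$ into blocks of size $2$ or $4$; moreover, the classification of transitive groups of degree $8$ gives a short, explicit list of possible isomorphism types for $L$, which I would keep at hand throughout the argument.

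Next, I would exploit normal quotients of $\Gamma$. If $G$ admits a non-trivial normal subgroup $N$ with at least three orbits on $\V(\Gamma)$, the quotient graph $\Gamma/N$ inherits $G/N$-arc-transitivity and its valency divides $8$. If the quotient valency drops to $4$ or less, the results of~\cite{Dobson,DraganScap} supply a semiregular automorphism of $\Gamma/N$, which I would lift to $\Gamma$ via a standard blow-up argument combining the action on $N$-orbits with a prime-order element of $N$. If the quotient still has valency $8$, then either $\Gamma/N$ is locally quasiprimitive, so Theorem~\ref{Theorem:MichaelXu} applies and lifts, or I would induct on $|\V(\Gamma)|$. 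After this reduction, one is left with the case where $G$ acts quasiprimitively on $\V(\Gamma)$ while $L$ remains non-quasiprimitive.

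In this residual case, I would invoke the O'Nan--Scott classification: a minimal normal subgroup $N$ of $G$ is characteristically simple and transitive on $\V(\Gamma)$. Either $N$ is already semiregular on $\V(\Gamma)$ and we are done, or $N_v\ne 1$. In the latter situation $N_v^{\Gamma(v)}$ is a normal subgroup of $L$, and compatibility with the short list of possibilities for $L$ sharply constrains $N$: it must be elementary abelian of small rank, or a product of a bounded number of small non-abelian simple groups. A case-by-case inspection, using the constraint that $N$ is transitive on vertices and that a point stabiliser embeds faithfully into a group of degree $8$, should either locate a semiregular element in $N$ (or in a slightly larger normal subgroup of $G$) or rule out the configuration entirely.

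The main obstacle I anticipate is the lifting step, since combining an orbit-level semiregular element with one acting within an $N$-orbit can produce fixed points when the relevant orders share a common factor. Overcoming this requires a careful choice of $N$, typically as a minimal normal subgroup of specific prime-power exponent or via a Sylow subgroup whose cycle structure is compatible with the quotient element. I expect this delicate interplay, together with the residual quasiprimitive-on-vertices case, to account for the bulk of the technical work; the rest is a bookkeeping exercise across the handful of surviving isomorphism types of $L$.
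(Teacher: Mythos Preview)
Your outline shares the opening move with the paper (reduce to $L=G_v^{\Gamma(v)}$ non-quasiprimitive, then study normal quotients), but the heart of the argument is missing, and in two places your plan would actually stall.

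\textbf{First gap: the lifting step.} You correctly flag it as the main obstacle, but you do not resolve it, and the phrase ``standard blow-up argument'' is misleading: there is no general mechanism for lifting a semiregular element of $\Aut(\Gamma/N)$ to one of $\Aut(\Gamma)$---this is precisely why the polycirculant conjecture is hard. Citing \cite{Dobson,DraganScap} for the $4$-valent quotient only produces \emph{some} semiregular element, possibly of $2$-power order, and possibly not even lying in the image of $G$. The paper's key idea, which your proposal lacks, is to first observe that a non-quasiprimitive transitive group of degree $8$ is a $\{2,3\}$-group (Lemma~\ref{Lemma:AtMost9}), so one may assume $G$ itself is a $\{2,3\}$-group and hence solvable. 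Then the minimal normal subgroup $N$ is elementary abelian; if it is not semiregular one shows $N$ (and the kernel $K$ on $N$-orbits) is a $2$-group. Now the lift works because one proves a tailored lemma (Lemma~\ref{lemma:4AT}) guaranteeing a semiregular element of \emph{odd} order in $G/K$ acting on the $4$-valent quotient, and coprimality with $|K|$ makes the lift go through (Lemma~\ref{lemma:coprime}). This coprime-lifting manoeuvre is the crux, and nothing in your sketch supplies it.

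\textbf{Second gap: the valency-$2$ quotient.} When $\Gamma/N$ has valency $2$ (a cycle), your proposal offers nothing beyond the same lifting idea, which fails for the same reason. The paper handles this case by an entirely different route: the block system on $\Gamma(v)$ with blocks of size $4$ gives $\Gamma$ a $G$-invariant orientation as a connected asymmetric digraph of out-valence $4$, and a separate structural theorem (Theorem~\ref{theo:digraph}) is proved for such digraphs via an inductive argument on the digraph of alternets. This piece of machinery is absent from your plan.

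Finally, your residual ``$G$ quasiprimitive on vertices'' case and the appeal to O'Nan--Scott are unnecessary: once $G$ is known to be solvable, a transitive abelian minimal normal subgroup is regular and Lemma~\ref{lemma:abelian2orbits} finishes immediately, so no case analysis over simple groups is needed.
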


\section{Preliminaries}
\subsection{Permutation groups}

We start with a few very basic lemmas about permutation groups.  Since the proofs are short, we include them for the sake of completeness.

\begin{lemma}\label{lemma:primepower}
Let $p$ be a prime and let $G$ be a transitive permutation group of degree a power of $p$. Then $G$ contains a non-identity semiregular element.
\end{lemma}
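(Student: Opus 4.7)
The plan is to pass to a Sylow $p$-subgroup, which is a non-trivial finite $p$-group, and then exploit its centre.

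First I would take a Sylow $p$-subgroup $P$ of $G$ and argue that $P$ is still transitive on the domain. Let the degree be $p^n$ and fix a point $v$; then $|G:G_v|=p^n$. The index $|G:PG_v|$ divides both $|G:G_v|=p^n$ and $|G:P|$; the latter is coprime to $p$, so $|G:PG_v|=1$, that is, $G=PG_v$. Consequently $|P:P\cap G_v|=|PG_v:G_v|=|G:G_v|=p^n$, so $P$ acts transitively on the domain.

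Next, assuming $G$ is non-trivial, $P$ is a non-trivial finite $p$-group, so $Z(P)$ is non-trivial and contains an element $z$ of order $p$. I claim $z$ is semiregular. Since $z$ has order $p$, every cycle of $z$ has length $1$ or $p$, so it is enough to show $z$ has no fixed points. The fixed-point set $\mathrm{Fix}(z)$ is preserved by the centraliser of $z$, and in particular by all of $P$ because $z\in Z(P)$. As $P$ is transitive, $\mathrm{Fix}(z)$ is either empty or the whole domain; the latter would force $z=1$, contradicting $|z|=p$. Hence $\mathrm{Fix}(z)=\emptyset$ and $z$ is semiregular of order $p$.

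There is no serious obstacle; the argument is classical. The only subtle point is that a Sylow $p$-subgroup of a general transitive group need not be transitive, but transitivity follows from the coprime-index calculation above precisely because the degree is a $p$-power, which is exactly our hypothesis.
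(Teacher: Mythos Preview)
Your proof is correct and follows essentially the same approach as the paper: pass to a Sylow $p$-subgroup, note that it is transitive, and take a non-identity central element. The only difference is that you supply the coprime-index argument for transitivity of the Sylow subgroup explicitly, whereas the paper simply cites Wielandt's book for this fact.
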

\begin{proof}
Let $S$ be a Sylow $p$-subgroup of $G$. By \cite[Theorem 3.4']{wielandt}, $S$ is transitive. Let $z$ be a non-identity element of the centre of $S$. If $z$ fixes a point, then it must fix all of them, which is a contradiction. It follows that $z$ is semiregular.
\end{proof}

\begin{lemma}\label{lemma:abelian2orbits}
If $G$ is a transitive permutation group with a non-trivial abelian normal subgroup $N$ that has at most two orbits, then $N$ contains a non-identity semiregular element.
\end{lemma}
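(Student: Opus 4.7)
The plan is to split on the number of orbits of $N$, which by hypothesis is $1$ or $2$. If $N$ is transitive then, being abelian and transitive, $N$ is regular, and every non-identity element is fixed-point-free and hence semiregular.

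Otherwise $N$ has exactly two orbits $\Omega_1,\Omega_2$. Because $G$ is transitive and $N\norml G$, some $g\in G$ satisfies $\Omega_1^g=\Omega_2$; this $g$ will be used to transport information between the two orbits. Let $K_i$ denote the kernel of the action of $N$ on $\Omega_i$. Then $K_1^g=K_2$, and since $N$ acts faithfully we have $K_1\cap K_2=1$. A standard consequence of $N$ being abelian and transitive on $\Omega_i$ is that every point-stabiliser in $N$ of a point of $\Omega_i$ coincides with $K_i$; hence for each $h\in N$, every cycle of $h$ on $\Omega_i$ has length equal to the order of $hK_i$ in $N/K_i$.

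If $K_1=1$, then $N$ acts regularly on each orbit and any non-identity element of $N$ is already semiregular. Otherwise, pick $n\in K_1\setminus\{1\}$ and consider $m:=n\,n^g\in N$. On $\Omega_1$ the factor $n$ acts trivially, so $m$ acts as $n^g$; symmetrically, on $\Omega_2$, $m$ acts as $n$. Since conjugation by $g$ is a permutation isomorphism from the action of $n$ on $\Omega_2$ to the action of $n^g$ on $\Omega_1$, the cycle lengths on the two orbits coincide, and by the previous paragraph all cycles on each orbit already have a common length; therefore $m$ has all its cycles of a single length. Finally $m\neq 1$: otherwise $n^g=n^{-1}$ would lie in $K_1\cap K_2=1$, forcing $n=1$.

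The only genuine obstacle is the two-orbit case, and specifically the need to produce a single element of $N$ whose cycle lengths agree across both orbits; this is resolved by the symmetrisation $n\mapsto n\,n^g$, together with the key observation that the abelianity of $N$ forces every cycle of a given $h\in N$ within an orbit to have the same length.
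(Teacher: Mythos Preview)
Your proof is correct and follows essentially the same approach as the paper: both arguments reduce to the two-orbit case and construct the semiregular element as $nn^g$, where $n$ lies in the kernel of the action on one orbit and $g$ interchanges the two orbits. Your presentation is slightly more elaborate (introducing the kernels $K_i$ and justifying the equality of cycle lengths via the permutation isomorphism induced by $g$), whereas the paper argues more tersely that $n$ and $n^g$ each act trivially on one orbit and semiregularly on the other, with matching cycle lengths since $|n^g|=|n|$.
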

\begin{proof}
If $N$ is semiregular, then we are done. We may thus assume that there exists a non-identity element $n\in N$ fixing some point $v$.  Since $N$ is abelian, $n$ fixes the $N$-orbit $v^N$ pointwise. It follows that $N$ has exactly two orbits. Let $u$ be a point not in $v^N$ and let $g\in G$ such that $v^g=u$. Then $n$ acts semiregularly on $u^N$, while $n^g$ fixes $u^N$ pointwise and acts semiregularly on $v^N$. It follows that $nn^g$ is a non-identity semiregular element of $N$.
\end{proof}

\begin{lemma}\label{lemma:coprime}
Let $G$ be a permutation group and let $K$ be a normal subgroup of $G$ such that $G/K$ acts faithfully on the $K$-orbits. If $G/K$ contains a semiregular element $gK$ of order coprime to $|K|$, then $G$ contains a semiregular element of order $|gK|$.
\end{lemma}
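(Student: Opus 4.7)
The plan is to produce an explicit element $h \in G$ of order $n := |gK|$ that lifts a generator of $\langle gK\rangle$, and then deduce semiregularity of $h$ on the underlying set from the semiregularity of $hK$ on the set of $K$-orbits.

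First I would control the order of $g$ itself. Since $(gK)^n = K$, the element $g^n$ lies in $K$, so writing $|g| = na$ gives $a = |g^n|$ dividing $|K|$. The coprimality hypothesis then forces $\gcd(n,a) = 1$, and I would set $h := g^a$. This element has order exactly $n$, and $hK = (gK)^a$ is again a generator of $\langle gK\rangle$ since $\gcd(n,a) = 1$; in particular, $hK$ is still semiregular of order $n$ on the $K$-orbits.

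Next, I would argue by contradiction that $h$ is semiregular on the underlying set. Suppose some $h^i$ with $1 \le i \le n-1$ fixes a point $v$. Normality of $K$ in $G$ ensures that $h^i$ preserves the $K$-orbit $v^K$ setwise, so $h^iK$ stabilises this orbit in the action of $G/K$ on $K$-orbits. However, $|h^i|$ divides $n$ and is therefore coprime to $|K|$, so $h^i \notin K$; thus $h^iK$ is a non-identity element of the cyclic group $\langle hK\rangle$, which is a semiregular permutation group on $K$-orbits, so $h^iK$ fixes no $K$-orbit --- a contradiction.

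The main point requiring care is the double use of the coprimality hypothesis: once to force $\gcd(n,a) = 1$ (so that $g^a$ has the correct order and still maps to a generator of $\langle gK\rangle$), and once in the final step to rule out $h^i \in K$. There is no serious obstacle beyond this bookkeeping; faithfulness of the $G/K$-action on $K$-orbits is not really needed, as semiregularity of $hK$ already gives everything.
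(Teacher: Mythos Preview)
Your proof is correct and essentially identical to the paper's: your $a=|g^n|$ is exactly the paper's $|k|$ (where $k=g^{|gK|}$), so both take $h=g^{|k|}$ and verify semiregularity by the same coprimality argument. Your closing remark that faithfulness of the $G/K$-action is not really needed is also accurate.
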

\begin{proof}
There exists $k\in K$ such that $g^{|gK|}=k$. Let $h=g^{|k|}$. Since $|k|$ and $|gK|$ are coprime, it follows that $|h|=|gK|$ and $hK$ is a non-identity power of $gK$ hence it is semiregular. We now show that $h$ is semiregular. Suppose $h^i$ fixes a point $v$ for some integer $i$. Then $h^iK=(hK)^i$ fixes $v^K$ and, since $hK$ is semiregular, $h^iK=K$. This implies that $h^i\in K$. Since $h$ has order coprime to $|K|$, it follows that $h^i=1$.
\end{proof}

\begin{lemma}\label{Lemma:AtMost9}
A transitive permutation group of degree $8$ is either primitive or it is a $\{2,3\}$-group.
\end{lemma}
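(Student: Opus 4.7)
The plan is to argue by contrapositive on imprimitivity. Assume $G$ is a transitive permutation group on $8$ points that is not primitive, so $G$ admits a nontrivial block system $\mathcal{B}$. Since block sizes must be proper divisors of $8$ greater than $1$, the blocks have size $2$ or $4$, with $|\mathcal{B}|=4$ or $2$ correspondingly. In each case I would bound $|G|$ by the obvious embedding into a wreath-product-type group and read off that only the primes $2$ and $3$ occur.

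More precisely, let $K$ be the kernel of the action of $G$ on $\mathcal{B}$. Then $G/K$ embeds in $\Sym(\mathcal{B})$, and $K$ embeds into the direct product of the symmetric groups on the blocks (acting independently on each). If $|\mathcal{B}|=4$, then $|G/K|$ divides $|\Sym(4)|=2^3\cdot 3$ and $|K|$ divides $|\Sym(2)|^4 = 2^4$, so $|G|$ divides $2^7\cdot 3$. If $|\mathcal{B}|=2$, then $|G/K|$ divides $2$ and $|K|$ divides $|\Sym(4)|^2=2^6\cdot 3^2$, so $|G|$ divides $2^7\cdot 3^2$. In either case $|G|$ is a $\{2,3\}$-number, as required.

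There is no real obstacle here: the statement is a small arithmetic observation about the possible block sizes of an imprimitive action on $8$ points. The only thing to keep in mind is to enumerate both possible nontrivial block sizes, and to use the standard fact that the kernel of the action on a block system embeds in the direct product of the symmetric groups on the individual blocks.
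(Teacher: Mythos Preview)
Your proof is correct and follows essentially the same approach as the paper: the paper argues that an imprimitive transitive group of degree $8$ has blocks of size $2$ or $4$ and hence embeds in $\Sym(4)\wr\Sym(2)$ or $\Sym(2)\wr\Sym(4)$, which is exactly what your kernel/quotient bound is computing. Your version just unpacks the wreath-product embedding explicitly.
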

\begin{proof}
Let $G$ be a transitive permutation group of degree $8$ that is not primitive. Then $G$ has a non-trivial block system, with blocks of size either $2$ or $4$. It follows that $G$ is isomorphic to a subgroup of $\Sym(4)\wr\Sym(2)$ or $\Sym(2)\wr\Sym(4)$ and hence it is a $\{2,3\}$-group.
\end{proof}

\subsection{Quotient graphs}

We will need some basic facts about quotient graphs, which we now collect. Let $\Gamma$ be a graph and let $N\leq\Aut(\Gamma)$. The {\em quotient graph} $\Gamma/N$ is the graph whose vertices are the $N$-orbits with  two such $N$-orbits $v^N$ and $u^N$ adjacent whenever there is a pair of vertices $v'\in v^N$ and $u'\in u^N$ that are adjacent in $\Gamma$.  Clearly, if $\Gamma$ is connected then so is $\Gamma/N$.

Let $\Gamma$ be a $G$-arc-transitive graph, let $N\norml G$ and let $K$ be the kernel of the action of $G$ on $N$-orbits. Then $K=NK_v\norml G$, $G/K\leq\Aut(\Gamma/N)$ and $\Gamma/N$ is $G/K$-arc-transitive. If $N$ has at least $3$ orbits, then the valency of $\Gamma/N$ is at least $2$ and divides the valency of $\Gamma$. If $\Gamma/N$ has the same valency as $\Gamma$, then $N=K$ and $K$ is semiregular.

Some of these facts are used in the proof of the following lemma.

\begin{lemma}\label{lemma:4AT}
Let $p$ be an odd prime and let $\Gamma$ be a $4$-valent $G$-arc-transitive graph such that $p$ divides $|\V(\Gamma)|$. If $G$ is solvable, then $G$ contains a semiregular element of order $p$.
\end{lemma}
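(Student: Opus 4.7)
The plan is to proceed by induction on $|\V(\Gamma)|$. Let $N$ be a minimal normal subgroup of $G$; since $G$ is solvable, $N$ is an elementary abelian $q$-group for some prime $q$, and the argument splits by the number of orbits of $N$ on $\V(\Gamma)$ and by the valency of $\Gamma/N$.

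If $N$ has at most two orbits, then $|\V(\Gamma)|$ divides $2|N|$ and is a $\{2,q\}$-number; since $p$ is odd and divides $|\V(\Gamma)|$, this forces $p=q$, and Lemma~\ref{lemma:abelian2orbits} provides a semiregular element of $N$ of order $p$. Otherwise $N$ has at least three orbits, and writing $K$ for the kernel of $G$ on $N$-orbits, the quotient $\Gamma/N$ is a $G/K$-arc-transitive graph of valency $2$ or $4$, with $G/K$ solvable. When the valency is $4$, one has $K=N$ and $N$ is semiregular; I conclude either by noting that any non-identity element of $N$ is semiregular of order $p$ (when $p=q$), or by applying the inductive hypothesis to $\Gamma/N$ and lifting via Lemma~\ref{lemma:coprime} (when $p \neq q$, so $\gcd(p,|N|)=1$).

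The main case is the remaining one, where $\Gamma/N$ is a cycle. My strategy here is to show that $|G_v|$ is a $2$-group, from which the statement follows easily: since $p$ is odd, a Sylow $p$-subgroup $P$ of $G$ satisfies $P \cap G_v = 1$ for every $v$ and hence acts semiregularly on $\V(\Gamma)$; as $p$ divides $|\V(\Gamma)|$, $P$ is nontrivial, and any element of order $p$ in $P$ is a semiregular element of order $p$ in $G$. To identify $G_v^{\Gamma(v)}$ as a $2$-group, I would first rule out edges inside an $N$-orbit (if there were any, $G_v$ would preserve the partition of $\Gamma(v)$ into internal and external neighbours, contradicting its transitivity on $\Gamma(v)$), so the four neighbours of $v$ lie in the two $N$-orbits adjacent to $v^N$ in the cycle; a further arc-transitivity argument forces the $2+2$ split with $G_v$ swapping the two adjacent $N$-orbits, making $G_v^{\Gamma(v)}$ a transitive subgroup of $\Sym(4)$ preserving a block system with two blocks of size $2$, hence $G_v^{\Gamma(v)} \leq D_4$.

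The delicate step---upgrading ``$G_v^{\Gamma(v)}$ is a $2$-group'' to ``$G_v$ is a $2$-group''---is the hardest point I foresee. The plan is to use the standard stabiliser-chain argument: for each $s \geq 0$ the quotient $G_v^{[s]}/G_v^{[s+1]}$ embeds into a product of stabilisers of a single neighbour inside the local group at each vertex of $\Gamma_s(v)$, and each such stabiliser has order at most $2$ in our setting; combined with the Tutte--Weiss bound on $s$-arc-transitivity of $4$-valent graphs, which ensures the chain terminates, this yields $|G_v|$ as a $2$-group. Alternatively, one may invoke a known structural bound on vertex-stabilisers of $4$-valent arc-transitive graphs.
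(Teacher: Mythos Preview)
Your outline is correct, but the paper orders the case analysis more efficiently and thereby avoids what you call the ``main case'' altogether. Before choosing $N$, the paper first observes that if $G_v^{\Gamma(v)}$ is a $2$-group then (by connectedness and the standard stabiliser-chain argument) $G_v$ is a $2$-group, so every element of order $p$ in $G$ is semiregular and we are done. In the remaining case $G_v^{\Gamma(v)}$ is a transitive group of degree $4$ that is not a $2$-group, hence $2$-transitive; but then, whenever $N$ has at least three orbits, the partition of $\Gamma(v)$ by $N$-orbits cannot be a nontrivial $G_v$-invariant block system, so $\Gamma/N$ is forced to have valency $4$. Thus the cycle-quotient case simply does not occur under this ordering, and the induction closes immediately. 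Your route reaches the same endpoint by entering the valency-$2$ case and then proving $G_v^{\Gamma(v)}\le D_4$; that works, but it amounts to rediscovering the paper's first case from inside the induction.

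Two small corrections. First, you should reduce to $\Gamma$ connected at the outset; this is needed both for the induction (so that $\Gamma/N$ is connected) and for the ``$G_v^{\Gamma(v)}$ a $2$-group $\Rightarrow$ $G_v$ a $2$-group'' step. Second, your invocation of the Tutte--Weiss bound to terminate the chain $G_v^{[s]}$ is misplaced: Tutte--Weiss bounds $s$-arc-transitivity, which is a different statement. The chain terminates simply because $\Gamma$ is finite and connected, so $G_v^{[s]}=1$ once $s$ reaches the diameter; this is exactly what the paper means by ``by vertex-transitivity of $G$ and connectedness of $\Gamma$''. With that fix, your ``delicate step'' is in fact routine.
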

\begin{proof}
The proof goes by induction on $|\V(\Gamma)|$. Clearly we may assume that $\Gamma$ is connected. Let $v\in\V(\Gamma)$. If $G_v^{\Gamma(v)}$ is a $2$-group then, by vertex-transitivity of $G$ and connectedness of $\Gamma$, it follows that $G_v$ is a $2$-group and every element of $G$ of order $p$ is semiregular. We thus assume that $G_v^{\Gamma(v)}$ is not a $2$-group and, since it is a transitive group of degree $4$,  it is $2$-transitive. Since $G$ is solvable, it contains a non-trivial normal elementary abelian $q$-group $N$. If $N$ has at most two orbits, then $p=q$ and the result follows from Lemma~\ref{lemma:abelian2orbits}. 

From now on, we assume that $N$ has at least three orbits. Since $G_v^{\Gamma(v)}$ is $2$-transitive, this implies that $\Gamma/N$ is $4$-valent and hence $N$ is semiregular and $G/N$ acts faithfully on $\Gamma/N$. If $p=q$, then $N$ contains a semiregular element of order $p$ and the conclusion holds. Suppose now that $p\neq q$. Then $\Gamma/N$ is a connected, $4$-valent $G/N$-arc-transitive graph. Since $p$ is coprime to $|N|$, it divides $|\V(\Gamma/N)|$. By the induction hypothesis, $G/N$ contains a semiregular element of order $p$. The result then follows from Lemma~\ref{lemma:coprime}.
\end{proof}

\subsection{Digraphs}
Finally, we will need a few notions about digraphs. We follow the terminology of~\cite{Digraph} closely. A {\em digraph} $\vGa$ consists of a finite non-empty set of \emph{vertices} $\V(\vGa)$ and a set of \emph{arcs} $\A(\vGa)\subseteq V\times V$, which is an arbitrary binary relation on $V$. A digraph $\vGa$ is called {\em asymmetric} provided that the relation $\A(\vGa)$ is asymmetric.

If $(u,v)$ is an arc of $\vGa$, then we say that $v$ is an {\em out-neighbour} of $u$ and that $u$ is an {\em in-neighbour} of $v$. The symbols $\vGa^+(v)$ and $\vGa^-(v)$ will denote the set of out-neighbors of $v$ and the set of in-neighbors of $v$, respectively. We also say that $u$ is the {\em tail} and $v$ the {\em head} of $(u,v)$, respectively.  The digraph $\vGa$ is said to be of out-valence $k$ if $|\vGa^+(v)| = k$ for every $v\in \V(\vGa)$. 

An automorphism of $\vGa$ is a permutation of $\V(\vGa)$ which preserves the relation $\A(\vGa)$. The set of automorphisms of $\vGa$ form a group, denoted $\Aut(\vGa)$. We say that $\vGa$ is arc-transitive provided that $\Aut(\vGa)$ acts transitively on $\A(\vGa)$. 

We say that two arcs $a$ and $b$ of $\vGa$ are {\em related} if they have a common tail or a common head. Let $R$ denote the transitive closure of this relation. The {\em alternet} of $\vGa$ (with respect to $a$) is the subdigraph of $\vGa$ induced by the $R$-equivalence class $R(a)$ of the arc $a$.  (i.e. the digraph with vertex-set consisting of all heads and tails of arcs in $R(a)$ and whose arc-set is $R(a)$). If the alternet with respect to $(u,v)$ contains an arc of the form $(v,w)$, then this alternet is called \emph{degenerate}. 

If the alternet of the arc $(u,v)$ is non-degenerate, then it is a connected bipartite digraph where the first bipartition set consists only of sources while the second bipartition set contains only sinks. An important case occurs when this alternet is in fact a complete bipartite digraph in which case we will simply say that the alternet is {\em complete bipartite}. We say that $\vGa$ is {\em loosely attached} if $\vGa$ has no degenerate alternets and the intersection of the set of sinks of one alternet intersects the set of sources of another alternet in at most one vertex. 

We define the {\em digraph of alternets} $\AlG(\vGa)$  of $\vGa$ as the digraph the vertices of which are the alternets of $\vGa$ and with two alternets $A$ and $B$ forming an arc $(A, B)$ of $\AlG(\vGa)$ whenever the intersection of the set of sinks of $A$ with the set of sources of $B$ is non-empty. 

We are now ready to prove the following theorem.

\begin{theorem}\label{theo:digraph}
Let $\vGa$ be a connected asymmetric arc-transitive digraph of out-valence $4$. Then $\vGa$ has a non-identity semiregular automorphism.
\end{theorem}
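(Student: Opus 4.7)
My plan is to induct on $|\V(\vGa)|$, writing $G=\Aut(\vGa)$. Following the same template as the proof of Lemma~\ref{lemma:4AT}, I would pick a minimal normal subgroup $N$ of $G$ and analyse the quotient digraph $\vGa/N$.

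If $N$ is abelian and has at most two orbits on $\V(\vGa)$, Lemma~\ref{lemma:abelian2orbits} produces the required element directly. If $N$ has at least three orbits, then (by the digraph analogue of the quotient argument sketched in the Quotient graphs subsection) the out-valence of $\vGa/N$ divides $4$, and one of three things happens: the out-valence stays $4$, in which case $N$ is semiregular and we are done; the out-valence drops to $1$, in which case $\vGa/N$ is a directed cycle whose rotation can be lifted via Lemma~\ref{lemma:coprime} (with the pathological case $|\V(\vGa)|$ a prime power being absorbed by Lemma~\ref{lemma:primepower}); or the out-valence drops to $2$, in which case $\vGa/N$ is a smaller connected asymmetric arc-transitive digraph of out-valence $2$, and the conclusion has to be established by a separate (and easier, because the out-valence is prime) argument before lifting through Lemma~\ref{lemma:coprime}.

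For the non-abelian case, the appropriate move is to exploit the alternet structure. Arc-transitivity and asymmetry guarantee that all alternets are isomorphic and non-degenerate, and that $G$ permutes them transitively; after reducing to the loosely attached, complete-bipartite (i.e.\ $K_{4,4}$) case, the digraph of alternets $\AlG(\vGa)$ is a strictly smaller $G$-arc-transitive digraph of out-valence $4$. Provided $\AlG(\vGa)$ remains asymmetric, the induction hypothesis yields a semiregular automorphism of $\AlG(\vGa)$, which one then lifts back to $\vGa$ using the correspondence between vertices of $\vGa$ and arcs of $\AlG(\vGa)$.

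The main obstacle I anticipate is twofold. First, handling the out-valence-$2$ subcase: a self-contained argument for connected asymmetric arc-transitive digraphs of prime out-valence must be supplied, together with the coprimality bookkeeping needed to feed it into Lemma~\ref{lemma:coprime}. Second, ensuring that the reduction to $\AlG(\vGa)$ really produces an asymmetric arc-transitive digraph of out-valence $4$, which requires treating degenerate alternets, tightly attached alternets, and alternets of size larger than $K_{4,4}$ as separate subcases. Both of these difficulties are precisely what the alternet machinery set up immediately before the theorem is designed to resolve.
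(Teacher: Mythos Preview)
Your case split is inverted relative to the paper, and this matters. The paper's first move is to reduce to $G$ being solvable: one may assume every prime dividing $|G|$ divides $|G_v|$ (else that prime gives a semiregular element), and then if $G_v$ is a $2$-group Lemma~\ref{lemma:primepower} finishes, while if not then $G_v^{\vGa^+(v)}$ is $2$-transitive of degree~$4$, so $G_v$ and hence $G$ is a $\{2,3\}$-group. Thus the minimal normal subgroup $N$ is \emph{always} abelian; there is no non-abelian branch.

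The alternet machinery is then deployed precisely in the abelian, non-semiregular situation---not as a fallback for a non-abelian $N$. The reason the alternets are non-degenerate and complete bipartite is not ``arc-transitivity and asymmetry'' as you claim; it is that $N_v\neq 1$ forces $N_v^{\vGa^+(v)}$ and $N_v^{\vGa^-(v)}$ to be transitive (being nontrivial normal subgroups of a $2$-transitive group), while abelianness of $N$ forces $N_v$ to fix $v^N$ pointwise. These two facts together pin down the alternet structure. After handling the non--loosely-attached case via twin vertices, $\AlG(\vGa)$ is again connected, asymmetric, arc-transitive of out-valence~$4$, with $|\V(\vGa)|/4$ vertices, and a semiregular element there is automatically semiregular on $\vGa$---so the induction closes with no change of out-valence and no lifting through Lemma~\ref{lemma:coprime}.

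Your quotient-by-$N$ plan, by contrast, has a genuine gap. When the out-valence drops to $2$ (or $1$), lifting via Lemma~\ref{lemma:coprime} requires a semiregular element in $G/K$ of order coprime to $|K|$. Since $G$ is a $\{2,3\}$-group and $K$ can perfectly well involve both primes, there is no coprimality to exploit; the ``bookkeeping'' you allude to does not go through. The paper sidesteps this entirely by using the alternet quotient, which preserves out-valence~$4$.
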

\begin{proof}
The proof goes by induction on $|\V(\vGa)|$. Since $\vGa$ is connected and arc-transitive (and finite), it follows that it is vertex-transitive and strongly connected (see for example~\cite[Lemma 2]{Neu}).

Let $v\in\V(\vGa)$ and let $G=\Aut(\vGa)$. Without loss of generality, we may assume that every prime that divides $|G|$ also divides $|G_v|$. If $G_v$ is a $2$-group, then the conclusion follows from Lemma~\ref{lemma:primepower}. We may thus assume that $G_v$ is not a $2$-group and hence neither is $G_v^{\Gamma^+(v)}$. Since $G_v^{\Gamma^+(v)}$ is a transitive permutation group of degree $4$, it is $2$-transitive. 

Since $\vGa$ has out-valence $4$, $G_v^{\Gamma^+(v)}$ is a $\{2,3\}$-group, hence so are $G_v$ and $G$ and therefore $G$ is solvable by Burnside's Theorem. It follows that $G$ has an abelian minimal normal subgroup $N$. If $N$ is semiregular, the conclusion holds. We may thus assume that $N$ is not semiregular and hence $N_v^{\Gamma^+(v)}$ is a non-trivial normal subgroup of $G_v^{\Gamma^+(v)}$ and therefore is transitive. The same argument yields that $N_v^{\Gamma^-(v)}$ is also transitive.

Let $(u,v)$ be an arc of $\vGa$. We have just seen that $N_u^{\Gamma^+(u)}$ and $N_v^{\Gamma^-(v)}$ are both transitive. This implies that $\Gamma^+(u)\subseteq v^N$ and $\Gamma^-(v)\subseteq u^N$. On the other hand, $N$ is abelian and hence $N_u$ fixes $u^N$ pointwise and $N_v$ fixes $v^N$ pointwise. It follows that the alternet of $\vGa$ with respect to $(u,v)$ is not degenerate and is complete bipartite.

If $\vGa$ is not loosely attached, then it follows that, for every vertex $x$, there exists at least one other vertex $y$ such that $\vGa^-(x)=\vGa^-(y)$ and $\vGa^+(x)=\vGa^+(y)$. It follows easily that $\vGa$ has a non-identity semiregular automorphism in this case.

We may thus assume that $\vGa$ is loosely attached. Let $\vGa'=\AlG(\vGa)$ be the digraph of alternet of $\vGa$. It follows from~\cite[Lemmas~3.1-3.3]{Digraph} that $\vGa'$ is a connected asymmetric digraph of out-valence $4$ and that $G=\Aut(\vGa')$. It also follows easily that an automorphism which is semiregular on $\vGa'$ is also semiregular on $\vGa$.

Note that $|\V(\vGa')|=|\V(\vGa)|/4$ and hence we may apply the induction hypothesis to $\vGa'$ to conclude that it has a non-identity semiregular automorphism $g$. Together with, the observation in the previous sentence, this concludes the proof.
\end{proof}

\section{Proof of Theorem~\ref{theo:main}}
Let $\Gamma$ be an arc-transitive graph of valency $8$. We must show that $\Gamma$ has a non-identity semiregular automorphism.

Clearly, we may assume that $\Gamma$ is connected. Let $G=\Aut(\Gamma)$. If $\Gamma$ is locally-quasiprimitive, the result follows from~Theorem~\ref{Theorem:MichaelXu}. We therefore assume that $G_v^{\Gamma(v)}$ is not quasiprimitive. By Lemma~\ref{Lemma:AtMost9}, $G_v^{\Gamma(v)}$ is a $\{2,3\}$-group and hence so is $G_v$. We may also assume that $G$ itself is a $\{2,3\}$-group and hence it is solvable by Burnside's Theorem.

It follows that $G$ has an elementary abelian minimal normal subgroup $N$. If $N$ has at most two orbits,  the result follows from Lemma~\ref{lemma:abelian2orbits}. We may thus assume that $N$ has at least three orbits and, in particular, $N_v^{\Gamma(v)}$ is intransitive. If $N$ is semiregular, then the conclusion follows. We may thus assume that $N_v\neq 1$ and hence $N_v^{\Gamma(v)}\neq 1$.  In particular, $N_v^{\Gamma(v)}$ is a non-trivial, intransitive normal subgroup of $G_v^{\Gamma(v)}$. It follows that $N_v^{\Gamma(v)}$ is a $2$-group and hence $N$ is an elementary abelian $2$-group. Let $K$ be the kernel of the action of $G$ on $N$-orbits.

If $\Gamma/N$ is $4$-valent, then $v$ is adjacent to at most two vertices from any $N$-orbit. It follows that $K_v^{\Gamma(v)}$ is a $2$-group and hence so are $K_v$ and $K$. If $|\V(\Gamma/N)|$ is a power of $2$, then so is $|\V(\Gamma)|$ and the result follows from Lemma~\ref{lemma:primepower}. We may thus assume that $|\V(\Gamma/N)|$ is not a power of $2$ and, by Lemma~\ref{lemma:4AT}, $G/K$ contains a semiregular element of odd order. It then follows from Lemma~\ref{lemma:coprime} that $G$ contains a semiregular element of odd order.

It remains to deal with the case when $\Gamma/N$ is $2$-valent. In this case, there is a natural orientation of $\Gamma$ as a connected asymmetric $4$-valent digraph $\vGa$ and $\Aut(\vGa)$ is a subgroup of index $2$ in $G$. By Theorem~\ref{theo:digraph}, $\Aut(\vGa)$ has a semiregular element. This concludes the proof.

%%%%%%%%%%%%%%%%%%%%%%%%%%%%%%%%%%%%%%%%%%%%%%%%%%%%%%%%%%%%%%%%%%%%%%%%%%%%%%%
\bibliographystyle{amsplain}

\end{document}